\documentclass[a4paper]{llncs}
\usepackage{amsfonts,amsmath,verbatim}
\DeclareMathOperator{\Ff}{Fact}
\DeclareMathOperator{\Pre}{Pref}

\DeclareMathOperator{\card}{card}
\DeclareMathOperator{\alf}{alph}

\newtheorem{Question}{Question}
 \title{A Coloring Problem for Sturmian and Episturmian Words}
 
\author{Aldo de Luca\inst{1}\and Elena V. Pribavkina\inst{2}\and Luca Q. Zamboni\inst{3}}
\institute{%
Dipartimento di Matematica\\
Universit\`a di Napoli Federico II, Italy\\
\email{aldo.deluca@unina.it}
\and
Ural Federal University, Ekaterinburg, Russia\\
\email{elena.pribavkina@usu.ru}
\and
Universit\'e Claude Bernard Lyon 1, France\\
and University of Turku, Finland\\
\email{lupastis@gmail.com}
}
\begin{document}
\maketitle
\begin{abstract} We consider the following open question in the spirit of Ramsey theory: Given an aperiodic infinite word $w$, does there exist a finite coloring of its factors such that no factorization of $w$ is monochromatic? We show that such a coloring always exists whenever $w$ is a Sturmian word or a standard episturmian word.
\end{abstract}

\section{Introduction}
 Ramsey theory (including Van der Waerden's theorem) (see \cite{GRS}) is a topic  of  great interest in combinatorics with connections to various fields of mathematics. A remarkable consequence of Ramsey's Infinitary Theorem applied to combinatorics on words yields    the following unavoidable regularity of infinite words\footnote{Actually, the proof of Theorem \ref{thm:schutz} given by Sch\"utzenberger in \cite{schutz} does not use Ramsey's theorem}: 

\begin{theorem}\label{thm:schutz} Let $A$ be a non-empty alphabet, $w$ be an infinite word over $A$, $C$ a finite non-empty set (the set of colors), and $c:  \Ff ^+ w \rightarrow C$ any coloring of the set  $\Ff ^+ w$ of all non-empty factors of $w$. Then there exists a factorization of $w$ of the form $w= VU_1U_2\cdots U_n\cdots$ such that for all positive integers $i$ and $j$, $c(U_i)=c(U_j)$.
\end{theorem}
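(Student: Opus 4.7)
The plan is to derive this from the infinite version of Ramsey's theorem applied to pairs of positions in $w$. The key idea is that any coloring $c$ of the factors of $w$ naturally induces a coloring of the pairs of positions in $w$, since each pair of positions $(i,j)$ with $i<j$ determines a unique factor, namely the one occupying positions $i, i+1, \dots, j-1$. A monochromatic infinite set of positions then yields exactly a monochromatic factorization.

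First I would fix an indexing $w = w_0 w_1 w_2 \cdots$ with $w_k \in A$, and for each factor $u$ occurring between positions $i$ and $j-1$ write $u = w[i,j) := w_i w_{i+1} \cdots w_{j-1}$. I would then define an auxiliary coloring $\tilde{c}\colon \binom{\mathbb{N}}{2} \to C$ of the $2$-element subsets of $\mathbb{N}$ by
\[
\tilde{c}(\{i,j\}) \;=\; c\bigl(w[i,j)\bigr) \quad \text{for } i<j.
\]
Since $C$ is finite, the infinite Ramsey theorem applied to $\tilde{c}$ produces a single color $\gamma \in C$ and an infinite set $S = \{n_0 < n_1 < n_2 < \cdots\} \subseteq \mathbb{N}$ with $\tilde{c}(\{n_p,n_q\}) = \gamma$ for every $p<q$.

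Next I would read off the desired factorization directly from $S$: set $V = w[0,n_0)$ (the empty word if $n_0 = 0$) and, for each $k \geq 1$, set $U_k = w[n_{k-1}, n_k)$. Each $U_k$ is a non-empty factor of $w$, the concatenation $V U_1 U_2 \cdots$ equals $w$, and by construction
\[
c(U_k) \;=\; c\bigl(w[n_{k-1}, n_k)\bigr) \;=\; \tilde{c}(\{n_{k-1}, n_k\}) \;=\; \gamma,
\]
so $c(U_i) = c(U_j) = \gamma$ for all $i,j \geq 1$, as required.

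There is no real obstacle beyond invoking the correct form of Ramsey's theorem; the only mild subtlety is the bookkeeping that ensures $V U_1 U_2 \cdots$ exhausts all of $w$, which follows because $S$ is infinite and therefore $n_k \to \infty$. The footnote suggests Schützenberger has an alternative Ramsey-free argument, presumably by a direct compactness or König-lemma style combinatorial construction on finite prefixes, but the Ramsey route above is by far the shortest and most transparent.
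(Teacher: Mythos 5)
Your proof is correct, and it is precisely the argument the paper has in mind: the paper states this theorem without proof as ``a remarkable consequence of Ramsey's Infinitary Theorem,'' and your reduction---coloring pairs of positions $\{i,j\}$ by $c(w[i,j))$, extracting an infinite monochromatic set, and reading off the factorization from its consecutive elements---is exactly that standard application. Nothing is missing; the handling of the possibly empty prefix $V$ and the exhaustion of $w$ are both dealt with correctly.
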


One can ask whether given an infinite word there exists a suitable coloring map able to avoid  the  monochromaticity of {\em all}  factors in all factorizations of the word. 
More precisely,  the following variant of Theorem~\ref{thm:schutz} was posed as a question by T.C. Brown \cite{BTC} and, independently, by the third author \cite{LQZ}:

\begin{Question}\label{que:prima}. Let $w$ be an aperiodic  infinite   word over  a finite alphabet $A$. Does there exist a finite coloring
$ c:  \Ff ^+ w  \rightarrow C $
with the property that for any factoring $w= U_1U_2\cdots U_n \cdots$, there exist positive integers
$i,j$ for which $c(U_i)\neq c(U_j)$ ?
\end{Question}

Let us observe that for periodic words the answer to the preceding question is trivially  negative. Indeed, let $w=U^{\omega}$, and  $c:  \Ff ^+ w \rightarrow C$ be  any finite coloring. By factoring $w$ as $w= U_1U_2\cdots U_n\cdots$, where for all $i\geq 1$, $U_i=U$
one has $c(U_i)=c(U_j)$ for all positive integers $i$ and $j$. It is easy to see that there exist non recurrent infinite words $w$ and finite colorings such that for any factoring
$w=U_1U_2\cdots U_n\cdots$ there exist  $i\neq j$ for which $c(U_i)\neq c(U_j)$. For instance, consider the infinite word $w=ab^{\omega}$ and define the coloring map as follows: for any non-empty factor $U$ of $w$, $c(U)= 1$ if it contains $a$ and $c(U)=0$, otherwise. Then for any factoring
$w=U_1U_2\cdots U_n\cdots$, $c(U_1)=1$ and $c(U_i)= 0$ for all $i>1$.

It is not very difficult  to prove that there exist infinite recurrent words for which  Question \ref{que:prima} has a positive answer, for instance square-free, overlap-free words, and standard Sturmian words \cite{LQZ}. 

In this paper we show that Question \ref{que:prima} has a positive
answer  for every Sturmian word where the number of colors is equal to  $3$. This solves a problem raised in \cite{BTC} and in \cite{LQZ}.  The proof requires some noteworthy new combinatorial properties of Sturmian words. Moreover, we prove that the same result holds true for  aperiodic standard episturmian words by using a number of colors equal to the number of distinct letters occurring in the word plus one. 

For all definitions and notation not explicitly given in the paper, the reader is referred to the books of M. Lothaire \cite{LO,LO2}; for Sturmian words see \cite[Chap.~2]{LO2} and for episturmian words see \cite{DJP,JP} and the survey of J. Berstel \cite{BC}.

\section{Sturmian words}

There exist several equivalent definitions of  Sturmian words. In particular, we recall (see, for instance, Theorem  2.1.5 of \cite{LO2}) that an infinite word $s\in \{a,b\}^{\omega}$ is Sturmian if and only if it is aperiodic and {\em balanced}, i.e., for all factors $u$ and $v$ of $s$ such that $|u|=|v|$ one has:
$$ | |u|_x-|v|_x| \leq 1, \ x\in \{a,b\},$$
 where $|u|_x$ denotes the number of occurrences of the letter $x$ in $u$. Since a Sturmian word $s$ is aperiodic, it must have at least one of the two factors $aa$ and $bb$. However, from the balance property,  it follows that a Sturmian word cannot have  both the factors  $aa$ and $bb$.

\begin{definition} We say that a Sturmian word is of type $a$ (resp. $b$) if it does not contain the factor $bb$ (resp. $aa$).
\end{definition}
We recall that a factor $u$ of a finite or infinite word $w$ over the alphabet $A$ is called {\em right special} (resp. {\em left special}) if there exist two different letters $x,y\in A$ such that $ux, uy$ (resp. $xu, yu$) are factors of $w$.

 A different equivalent definition of a Sturmian word is the following: A binary infinite word $s$ is Sturmian if   for every  integer $n\geq 0$, $s$ has a unique left (or equivalently right) special factor  of length $n$. It follows from this that  $s$ is {\em closed under reversal}, i.e., if $u$ is a factor of $s$ so  is its reversal $u^{\sim}$.
 
 A Sturmian word $s$ is called {\em standard} (or {\em characteristic})  if  all its prefixes are left special factors of $s$.
  As is well known, for any Sturmian word $s$ there exists a standard Sturmian word $t$ such that
 $\Ff s = \Ff t$, where for any finite or infinite word $w$, $\Ff w$ denotes the set of all its factors including the empty word.

\begin{definition} Let $s\in \{a,b\}^{\omega}$ be a Sturmian word. A non-empty factor $w$ of $s$ is rich in the letter $z\in \{a, b\}$ if there exists a factor $v$ of $s$ such that $|v|=|w|$ and $|w|_z>|v|_z$.
\end{definition}
From the aperiodicity and the balance property of a Sturmian word one easily derives that any non-empty factor $w$ of a Sturmian word $s$ is  rich either in the letter $a$ or in the letter $b$ but not in both  letters.
Thus one can introduce for any given Sturmian word $s$
a map $$r_s :  \Ff ^+ s  \rightarrow \{a, b\}$$ defined as: for any non-empty factor $w$ of $s$,
$r_s(w)= z\in \{a, b\}$ if  $w$ is rich in the letter $z$. Clearly, $r_s(w)=r_s(w^{\sim})$ for any $w\in \Ff ^+ s$.

For any letter $z\in \{a, b\}$ we shall denote by
${\bar z}$ the complementary letter of $z$, i.e., ${\bar a}= b$ and ${\bar b}= a$.
\begin{lemma}\label{lemma:lastletter} Let $w$ be a non-empty right special (resp. left special) factor
of a Sturmian word $s$. Then $r_s(w)$ is equal to the first letter of $w$ (resp. $r_s(w)$ is equal to the last letter of $w$). 
\end{lemma}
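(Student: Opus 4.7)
My plan is to handle the right special case directly by exhibiting a factor of the same length but with one fewer occurrence of the first letter of $w$, and then to deduce the left special case from it using the closure of $s$ under reversal.

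For the right special case I would write $w=xu$ with $x\in\{a,b\}$ its first letter. Since $w$ is right special, both $wa$ and $wb$ belong to $\Ff s$; in particular $w\bar{x}=xu\bar{x}\in\Ff s$, so its suffix $v:=u\bar{x}$ is itself a factor of $s$. Then $|v|=|w|$ while $|v|_x=|u|_x=|w|_x-1$, which exhibits $w$ as rich in $x$. Since a non-empty factor of a Sturmian word is rich in exactly one letter (as noted just before the lemma statement, from aperiodicity and balance), this forces $r_s(w)=x$, the first letter of $w$. The only delicate case is $|w|=1$, where $u$ is empty and $v=\bar{x}$; this still goes through because $s$, being aperiodic over $\{a,b\}$, contains both letters.

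The left special case then follows by the reversal symmetry already observed in the text: if $w$ is left special in $s$, then $w^{\sim}$ is right special in $s$, and the first letter of $w^{\sim}$ coincides with the last letter of $w$. Combining the right special case with the identity $r_s(w)=r_s(w^{\sim})$ recorded just after the definition of $r_s$ yields that $r_s(w)$ equals the last letter of $w$.

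I expect no substantive obstacle here: the entire argument rests on the elementary fact that right speciality lets us swap the leading letter of $w$ for its complement without leaving $\Ff s$, combined with the uniqueness of the rich letter coming from aperiodicity and balance. The structural input from Sturmian theory is minimal and is used only through these two features.
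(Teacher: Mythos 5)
Your proposal is correct and matches the paper's proof: both exhibit $v=u\bar{x}$ (a suffix of $w\bar{x}\in\Ff s$) as a factor of the same length with one fewer occurrence of the first letter, forcing $r_s(w)=x$. The only cosmetic difference is in the left special case, where the paper argues \emph{similarly} (by a mirror argument prepending $\bar z$), while you invoke closure under reversal together with $r_s(w)=r_s(w^{\sim})$; both are immediate and rest on facts already recorded in the text.
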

\begin{proof} Write $w=zw'$ with $z\in \{a, b\}$ and $w'\in \{a, b\}^*$. Since $w$ is a right special factor of $s$ one has that $v=w'{\bar z}$ is a factor of $s$. Thus  $|w|= |v|$ and $|w|_z >|v|_z$, whence
$r_s(w)= z $. Similarly, if $w$ is left special one deduces that $r_s(w)$ is equal to the last letter of $w$.
\qed\end{proof}

\section{Preliminary Lemmas}

\begin{lemma}\label{theorem:primo} Let $s$ be a Sturmian word such that
$$ s= \prod_{i\geq 1} U_i,$$
where  the $U_i$'s are non-empty factors of $s$. If for every  $i$ and $j$, $r_s(U_i)= r_s(U_j)$, then
for any $M>0$ there exists an integer $i$ such that $|U_i|>M$.
\end{lemma}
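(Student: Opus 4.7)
I would argue by contradiction. Suppose that the lengths $|U_i|$ are bounded by some $M$. Since the common value $r_s(U_i)$ lies in $\{a,b\}$, by the symmetry $a\leftrightarrow b$ it suffices to treat the case $r_s(U_i)=a$ for every $i$. Let $\alpha\in(0,1)$ denote the frequency of the letter $a$ in $s$; for any Sturmian word this limit exists and is irrational. The standard strengthening of the balance property (see, e.g., \cite[Chap.~2]{LO2}) asserts that every factor $w$ of $s$ satisfies
$$|w|_a\in\{\floor{|w|\alpha},\ \floor{|w|\alpha}+1\},\qquad\text{hence}\qquad \bigl||w|_a-|w|\alpha\bigr|<1.$$

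Since $|U_i|_a$ can take only the two consecutive values above, the hypothesis that $U_i$ is rich in $a$ forces it to realize the larger one, namely $|U_i|_a=\floor{|U_i|\alpha}+1$. Consequently
$$|U_i|_a-|U_i|\alpha\;=\;1-\{|U_i|\alpha\}\;>\;0,$$
where $\{\cdot\}$ denotes the fractional part, strictly less than $1$ because $\alpha$ is irrational. As $|U_i|$ ranges in the finite set $\{1,2,\dots,M\}$, the excess $|U_i|_a-|U_i|\alpha$ takes only finitely many strictly positive values, and I let $\delta>0$ denote their minimum; this bound is valid uniformly in $i$.

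The contradiction now comes from summing over a prefix. Writing $P_n:=U_1U_2\cdots U_n$, which is a factor of $s$, linearity gives
$$|P_n|_a-|P_n|\alpha\;=\;\sum_{i=1}^n\bigl(|U_i|_a-|U_i|\alpha\bigr)\;\ge\;n\delta,$$
so the left-hand side grows without bound. On the other hand, the discrepancy bound above, applied to the factor $P_n$, yields $|P_n|_a-|P_n|\alpha<1$, and choosing any $n>1/\delta$ produces the required contradiction. The only nontrivial ingredient I would have to invoke is the discrepancy estimate $\bigl||w|_a-|w|\alpha\bigr|<1$ for factors of a Sturmian word; this is a classical consequence of balance and aperiodicity, and everything else reduces to an elementary pigeonhole argument exploiting the finite range of $|U_i|$.
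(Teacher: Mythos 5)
Your proof is correct, and it rests on the same two pillars as the paper's own argument: a contradiction from the assumed bound $M$, and the discrepancy estimate $|V|f_x(s)-1<|V|_x<|V|f_x(s)+1$ valid for every factor $V$ of $s$ (Prop.~2.1.10 of \cite{LO2}; your displayed bound, with $\alpha=f_a(s)$), combined with richness to conclude that each block $U_i$ carries strictly more than $|U_i|\alpha$ occurrences of the rich letter. Where the two arguments genuinely part ways is in how the contradiction is clinched. The paper works with ratios: boundedness leaves only finitely many distinct blocks, so $r=\min_i |U_i|_x/|U_i|$ satisfies $r>f_x(s)$, and the letter frequency computed along the prefixes $s_{[n]}$ is then at least $r$ --- the contradiction is reached in the limit $n\to\infty$, invoking the existence of the frequency (Prop.~2.1.11). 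You work additively: the excess $|U_i|_a-|U_i|\alpha=1-\{|U_i|\alpha\}$ depends only on the length $|U_i|\in\{1,\dots,M\}$, hence is bounded below by some $\delta>0$, and summing over blocks gives $|P_n|_a-|P_n|\alpha\geq n\delta$ for the prefix $P_n=U_1\cdots U_n$, which contradicts the very same discrepancy bound, now applied to $P_n$, as soon as $n>1/\delta$. Your route is slightly more elementary and more quantitative: no limit is taken, the contradiction occurs at an explicit finite stage (no such factorization can survive past roughly $1/\delta$ blocks), and your pigeonhole runs over the $M$ possible lengths rather than over the finitely many distinct words $U_i$. The paper's route, in exchange, needs only the one-sided strict inequality $|V|_x>|V|f_x(s)-1$ together with the existence of the frequency, and never has to pin $|U_i|_a$ down to the exact value $\floor{|U_i|\alpha}+1$.
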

\begin{proof} Suppose to the contrary that  for some positive integer $M$ we have that  $|U_i|\leq M$ for each $i\geq 1$. This
implies that the number of distinct $U_i$'s in the sequence $(U_i)_{i\geq 1}$ is finite, say $t$.
Let $ r_s(U_i)=x\in \{a, b\}$ for all $i\geq1$ and set for each $i\geq 1$:
$$ f_i = \frac{|U_i|_x}{|U_i|}.$$
Thus $\{f_i \mid i\geq 1\}$ is a finite set of at most $t$ rational numbers. We set $r= \min\{f_i \mid i\geq 1\}$.

Let $f_x(s)$ be the frequency of the letter $x$ in $s$ defined as 
$$ f_x(s)=\lim_{n\rightarrow\infty} \frac{|s_{[n]}|_x}{n},$$
where for every $n\geq1$, $s_{[n]}$ denotes the prefix of $s$ of length $n$. As is well known (see Prop. 2.1.11 of \cite{LO2}), $f_x(s)$ exists and is an irrational number.

Let us now prove that  $r >f_x(s)$. From Proposition 2.1.10 in \cite{LO2} one derives that for
all $V\in \Ff s$
$$  |V|f_x(s) -1 < |V|_x <   |V|f_x(s)+1.$$
Now for any $i\geq 1$, $U_i$ is rich in the letter $x$, so that there exists $V_i \in \Ff s$ such that
$|U_i|= |V_i|$ and $|U_i|_x>|V_i|_x$.
From the preceding inequality one has:
$$|U_i|_x = |V_i|_x+1>  |V_i|f_x(s)=  |U_i|f_x(s),$$
so that for all $i\geq 1$, $f_i> f_x(s)$, hence $r >f_x(s)$.

For any $n>0$, we can write the prefix $s_{[n]}$ of length $n$ as:
$$ s_{[n]}= U_1\cdots U_kU'_{k+1},$$
for a suitable $k\geq 0$ and $U'_{k+1}$ a prefix of $U_{k+1}$.
Thus
$$ |s_{[n]}|_x = \sum_{i=i}^k |U_i|_x+ |U'_{k+1}|_x.$$
Since $|U_i|_x= f_i|U_i|\geq r|U_i|$ and $|U'_{k+1}|\leq M$, one has
$$ |s_{[n]}|_x \geq r \sum_{i=1}^k |U_i| = r(n - |U'_{k+1}|) \geq rn -rM.$$
Thus
$$ \frac{|s_{[n]}|_x} {n} \geq r -r\frac{M}{n},$$
and 
$$ f_x(s)=\lim_{n\rightarrow\infty} \frac{|s_{[n]}|_x}{n} \geq r,$$
a contradiction.
\qed\end{proof}

In the following we shall consider the Sturmian morphism $R_a$, that we simply denote $R$,
defined as: 
\begin{equation}\label{morf:erre}
R(a)= a \  \mbox{and} \  R(b)=ba. 
\end{equation}

For any finite or infinite word $w$, $\Pre w$ will denote the set of all its prefixes. The following holds:

\begin{lemma}\label{lemma:onetwo} 
 Let $s$ be a Sturmian word and  $t\in \{a,b\}^{\omega}$ such that  $R(t)=s$. If either 
 \begin{itemize}

\item [1)] the first letter of $t$ (or, equivalently, of $s$) is $b$

\vspace{2mm}
 
  or
  
  \vspace{2 mm}

\item[ 2)] the Sturmian word $s$ admits a factorization:
$$s = U_1\cdots U_n\cdots,$$
where each $U_i$, $i\geq 1$, is a non-empty prefix of $s$ terminating in the letter $a$ and  $r_s(U_i)=r_s(U_j)$ for all $i,j \geq 1$, 
\end{itemize}
then $t$ is also Sturmian.
\end{lemma}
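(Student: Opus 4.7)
My plan is to verify that $t$ is aperiodic and balanced; on a binary alphabet these together are equivalent to Sturmianity. Aperiodicity is immediate: if $t=v^\omega$ for some non-empty finite $v$, then $s=R(t)=R(v)^\omega$ would be periodic, contradicting the Sturmianity of $s$. Moreover, since every $b$ in $s=R(t)$ is the first letter of some $R(b)=ba$ and is thus immediately followed by $a$, the word $s$ has no factor $bb$ and so is of type $a$. Consequently the alignment points of the $R$-decomposition of $s$ are exactly the positions immediately after each occurrence of $a$ in $s$ (together with position $0$), and factors of $t$ correspond bijectively, via $R$, to those factors of $s$ that begin and end at alignment points.

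For the balance of $t$ in Case~1 ($s$, and hence $t$, starts with $b$), I argue by contradiction. Supposing $t$ is unbalanced, a classical result on binary unbalanced words yields a (possibly empty) palindrome $p$ such that both $u=apa$ and $v=bpb$ are factors of $t$. Applying $R$, the aligned factors $R(u)=aR(p)a$ and $R(v)=baR(p)ba$ of $s$ satisfy $|R(v)|=|R(p)|+4=:L$ and $|R(v)|_a=|R(p)|_a+2$. The plan is to exhibit a factor of $s$ of length $L$ whose $a$-count is at least $|R(p)|_a+4$, contradicting the balance of $s$. Natural candidates are $R(aaapa)=aaaR(p)a$, $R(apaaa)=aR(p)aaa$, and $R(aapaa)=aaR(p)aa$; each has length $L$ and $a$-count $|R(p)|_a+4$, and each appears as a factor of $s$ iff its preimage ($aaapa$, $apaaa$, or $aapaa$) is a factor of $t$. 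If $t$ contains any such extension of $u$ by $a$'s on either side, we obtain the contradiction. The delicate subcase, which is the main obstacle, is when $t$ contains no factor $aa$ (i.e.\ $t$ is of type $b$): no direct extension works, and one must exploit the palindromic structure of $p$, the presence of $v=bpb$ in $t$, and the first-letter condition $t_1=b$ to construct a different length-$L$ factor of $s$ violating balance.

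Case~2 is handled via the induced factorization of $t$. Because each $U_i$ is a prefix of $s$ ending in $a$, every boundary position $|U_1\cdots U_i|$ is an alignment point, so $U_i=R(V_i)$ for a unique prefix $V_i$ of $t$ and $t=V_1V_2\cdots$; by Lemma~\ref{theorem:primo} the lengths $|V_i|$ are unbounded. This places $t$ in a strongly self-similar (``standard-like'') structure, which I plan to use either to force the first letter of $s$ to be $b$ (reducing to Case~1 through an analysis of the possible rich-letter values for prefixes, using Lemma~\ref{lemma:lastletter}) or to derive balance of $t$ directly by comparing the unboundedly long prefixes $V_i$ of $t$, their images $U_i$ in $s$, and the balance of $s$.
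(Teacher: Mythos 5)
Your proposal has a genuine gap, and it is the same missing idea in both cases. In Case~1 you aim to beat the factor $baR(p)ba$ (length $|R(p)|+4$, $a$-count $|R(p)|_a+2$) with a factor of the same length and $a$-count $|R(p)|_a+4$, and all three of your candidates require a factor $aa$ in $t$; you then concede that when $t$ has no factor $aa$ you have no construction. That unresolved subcase is precisely the heart of the lemma, and insisting on length $|R(p)|+4$ is what creates the obstacle. The paper instead compares factors of length $|R(p)|+3$: it needs only \emph{one} extra $a$ to the left of $aR(p)a$ in $s$, and that single $a$ comes for free, because $R(a)=a$ and $R(b)=ba$ both end in $a$. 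Concretely, if $apa$ is not a prefix of $t$, write $t=\lambda\, apa\, \mu$ with $\lambda$ non-empty; then $R(\lambda)$ ends in $a$, so $aaR(p)a\in\Ff s$, and the pair $aaR(p)a$, $baR(p)b$ (both of length $|R(p)|+3$, with $a$-counts differing by $2$) contradicts the balance of $s$. In Case~1 the hypothesis that $t$ begins with $b$ guarantees $apa\notin\Pre t$, so this closes the case with no subcases at all --- in particular it never needs $aa\in \Ff t$. Your own observation that every alignment point of the $R$-decomposition sits immediately after an occurrence of $a$ in $s$ is exactly this fact; you state it but never exploit it.

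Case~2 in your write-up is only a pair of alternative strategies (``force the first letter to be $b$'' or ``derive balance directly''), neither of which is carried out, so there is nothing to check. The paper's resolution is to run the same contradiction argument: the only situation it does not cover is $apa\in\Pre t$, i.e.\ $aR(p)a\in\Pre s$. Then Lemma~\ref{theorem:primo}, applied to $s$ and the factorization $s=U_1U_2\cdots$ (this is exactly where the equal-richness hypothesis of Case~2 enters), yields some $i>1$ with $|U_i|>|aR(p)a|$; since $U_i$ is a prefix of $s$, the word $aR(p)a$ is a prefix of $U_i$, and since $U_{i-1}$ ends in $a$ and $U_{i-1}U_i\in\Ff s$, again $aaR(p)a\in\Ff s$, a contradiction. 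Note also that applying Lemma~\ref{theorem:primo} to $t$ and the $V_i$, as you propose, is not legitimate at that stage: the lemma presupposes that the ambient word is Sturmian, which for $t$ is the very thing being proved (you can only apply it to $s$ and transfer unboundedness to the $V_i$). Finally, your aperiodicity argument excludes only purely periodic $t$; aperiodic means not ultimately periodic, though the same one-line argument ($t=uv^{\omega}$ gives $s=R(u)R(v)^{\omega}$) covers that too.
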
 

\begin{proof} Let us prove that in both  cases $t$ is balanced. Suppose to the contrary that $t$ is unbalanced. 
Then (see Prop. 2.1.3 of \cite{LO2}) there would exists $v$ such that 
$$ava, bvb \in \Ff t.$$
Thus
$$aR(v)a, \ baR(v)ba \in \Ff s.$$
If $ava \not\in \Pre t$, then  $t= \lambda ava \mu$, with $\lambda\in \{a, b\}^+$ and $\mu\in  \{a,b\}^{\omega}$. Therefore $R(t)= R(\lambda)R(ava)R(\mu)$. Since the last letter of $R(\lambda)$ is $a$, it follows that  $aaR(v)a \in \Ff s$. As $baR(v)b\in \Ff s$ we reach a contradiction with the balance
property of $s$. In  case 1), $t$ begins in the letter $b$, so that $ava \not\in \Pre t$ and then $t$ is balanced.
In  case 2) suppose that $ava\in \Pre t$. This implies that $aR(v)a \in \Pre s$. From the preceding lemma in the factorization of  $s$ in prefixes there exists an integer $i>1$ such that $|U_i|> |aR(v)a|$.
Since $U_{i-1}$ terminates in $a$ and $U_{i-1}U_i \in\Ff s $, it follows that $aaR(v)a \in \Ff s$ and
one contradicts again the balance property of $s$. Hence, $t$ is balanced. 

Trivially, in both cases $t$ is aperiodic, so that $t$ is Sturmian.
\qed\end{proof}
Let us remark that, in general, without any additional  hypothesis, if $s=R(t)$, then $t$ need not  be Sturmian. For instance, if $f$ is the Fibonacci word $f= abaababaaba\cdots$, then $af$ is also a Sturmian word. However, it is readily verified that in this case
the word $t$ such that $R(t)=s$ is not balanced, so that $t$ is not Sturmian.

For any  finite or infinite word $w$ over the alphabet $A$, $\alf w$ denotes the set of all distinct letters of $A$ occurring in $w$.
We will make use of the following lemma:
\begin{lemma}\label{lemma:cp} Let $s$ be a Sturmian word  having a factorization
$$ s = U_1\cdots U_n \cdots,$$
where for $ i\geq 1$, $U_i$ are non-empty prefixes of $s$. Then for any $p\geq 1$, $U_1\neq c^p$
where $c$ is the first letter of $s$. 
\end{lemma}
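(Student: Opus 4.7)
The plan is to argue by contradiction. Suppose $U_1 = c^p$ for some $p \geq 1$, and let $m$ be the length of the initial maximal run of $c$'s in $s$; since a Sturmian word is aperiodic, $m$ is finite, so $s$ begins with $c^m \bar{c}$.

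I will show by induction on $k$ that every partial concatenation $U_1 \cdots U_k$ is a power of $c$ of length at most $m - 1$. This would immediately yield a contradiction, since $L_k := \sum_{i=1}^{k} |U_i| \geq k$ cannot stay bounded as $k \to \infty$. For the base case, $U_1 = c^p$ being a prefix of $s$ forces $p \leq m$, and the case $p = m$ is ruled out because then $U_2$ would have to begin at position $m$ of $s$, where the letter is $\bar{c}$, contradicting that $U_2$ is a prefix of $s$ (which starts with $c$). For the inductive step, since both $U_{k+1}$ and $U_1 \cdots U_{k+1}$ are prefixes of $s$, one has $s_{L_k + i} = s_i$ for every $i < |U_{k+1}|$. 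If $|U_{k+1}|$ exceeded $m - L_k$, then taking $i = m - L_k$ would give $s_m = s_{m - L_k}$, which is absurd because $L_k \geq 1$ places the right-hand side strictly inside the initial $c$-run, so $s_{m - L_k} = c$, while $s_m = \bar{c}$. Hence $|U_{k+1}| \leq m - L_k$, which forces $U_{k+1}$ to be a power of $c$ (any prefix of $s$ of length at most $m$ is), and the boundary case $L_{k+1} = m$ is excluded by the same reasoning as in the base case, applied to $U_{k+2}$.

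The argument is essentially routine bookkeeping of positions. The only subtlety, and the main obstacle such as it is, is that the induction must carry the strict bound $L_k \leq m - 1$ rather than $L_k \leq m$; it is precisely the boundary case, in which the next factor would begin exactly at the first $\bar{c}$ of $s$, that enforces this strict inequality and allows the induction to close.
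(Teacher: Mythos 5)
Your proof is correct, but it is organized differently from the paper's, which is considerably shorter. The paper argues as follows: by aperiodicity there is a minimal index $j$ such that $U_j$ contains both letters; then $U_1\cdots U_{j-1}=c^q$ for some $q\geq p$, and since both $U_j$ and $U_1\cdots U_j=c^qU_j$ are prefixes of $s$, one gets the word equation $c^qU_j=U_j\xi$; a word that is a prefix of $c^q$ times itself must lie in $c^*$, contradicting the choice of $j$. Your argument replaces this minimal-counterexample-plus-word-equation step by explicit positional bookkeeping against $m$, the length of the initial run of $c$'s: you show inductively that every partial concatenation $U_1\cdots U_k$ is a power of $c$ of length at most $m-1$, which contradicts the divergence of $L_k\geq k$. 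Both proofs exploit the same underlying mechanism---each new factor is a prefix of $s$ that starts inside the initial $c$-run and is therefore itself trapped in that run---but yours avoids any appeal to a word-equation (or primitivity) fact, since the length bound $|U_{k+1}|\leq m-L_k$ guarantees that $U_{k+1}$ never reaches position $m$ and is thus trivially a power of $c$; the cost is having to carry the strict bound $L_k\leq m-1$ through the induction, the subtlety you correctly identified. The paper's version needs no length bounds at all: the equation $c^qU_j=U_j\xi$ forces $U_j\in c^*$ regardless of how $|U_j|$ compares with $q$ or with $m$, which is what makes it a three-line proof.
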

\begin{proof} Suppose that $U_1=c^p$. Since $s$ is aperiodic there exists a minimal integer $j$
such that  $\card(\alf U_j) = 2$.  Since $U_j$ is a prefix of $s$, one has then  $U_1\cdots U_{j-1}U_j = U_j \xi$, with $\xi \in \{a, b\}^*$.  As $U_1\cdots U_{j-1} = c^q$ for a suitable $q\geq p$, it follows that
$\xi= c^q$ and $U_j\in cc^*$, a contradiction.
\qed\end{proof}

\section{Main results}

\begin{proposition}\label{prop:twocases} Let $s$ be a Sturmian word of type $a$ having a factorization
$$ s = U_1\cdots U_n \cdots,$$
where for $ i\geq 1$, $U_i$ are non-empty prefixes of $s$ such that $r_s(U_i)=r_s(U_j)$ for all $i,j\geq 1$.
Then one of the  following two properties holds:
\begin{itemize}
\item[i)] All $U_i$, $i \geq 1$, terminate in the letter $a$.
\item[ii)] For all $i\geq 1$, $U_ia \in \Pre s$.
\end{itemize}
\end{proposition}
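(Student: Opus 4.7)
The plan is case analysis on the first letter $c$ of $s$, closed off by Lemma~\ref{lemma:lastletter}.

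If $c=b$, then each $U_i$ begins with $b$, so any $U_i$ ending in $b$ would make the factor $U_iU_{i+1}$ of $s$ contain $bb$—impossible, since $s$ is of type $a$. Hence every $U_i$ ends in $a$, which is (i).

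Suppose now $c=a$. Assume (i) fails and fix some $U_k$ ending in $b$. Any $U_i$ ending in $b$ trivially satisfies $U_ia\in\Pre s$ by the type-$a$ hypothesis, so the only obstruction to (ii) would be a $U_j$ ending in $a$ with $U_jb\in\Pre s$; I plan to exclude this. Such a $U_j$ would occur in $s$ with two different right extensions: followed by $b$ at position $1$ (since $U_jb\in\Pre s$), and followed by $c=a$ at position $|U_1\cdots U_{j-1}|+1$, the latter being the first letter of $U_{j+1}$. Hence $U_j$ is right special in $s$, and Lemma~\ref{lemma:lastletter} forces $r_s(U_j)=c=a$. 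The common rich letter $z$ is therefore $a$, and in particular $U_k$ is rich in $a$.

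The concluding contradiction is the delicate part and the principal obstacle. Writing $U_k=U_k'b$, one has $|U_k'|_a=|U_k|_a=\lceil|U_k|\alpha\rceil$ by the richness of $U_k$ in $a$ (with $\alpha=f_a(s)$), while the balance property of Sturmian words bounds $|U_k'|_a\leq\lceil(|U_k|-1)\alpha\rceil$. These two inequalities force the ceiling identity $\lceil|U_k|\alpha\rceil=\lceil(|U_k|-1)\alpha\rceil$. Now the type-$a$ hypothesis guarantees $s_{|U_k|-1}=a$ (because $s_{|U_k|}=b$ and $bb$ is forbidden), and I expect to combine this observation with Lemma~\ref{theorem:primo}—which furnishes arbitrarily large $|U_k|$—and the mechanical description of Sturmian letters to produce a $U_k$ for which the ceiling strictly increases, contradicting the identity and completing the proof.
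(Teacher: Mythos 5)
Your opening moves are correct and in fact coincide with the paper's: the case where $s$ begins with $b$ is handled identically, and your right-special argument (the prefix occurrence of $U_j$ followed by $b$, its factorization occurrence followed by $a$, then Lemma~\ref{lemma:lastletter}) is exactly the mechanism the paper uses to force the common rich letter to be $a$. Your ceiling computation is also valid as far as it goes: a factor of length $n$ rich in $a$ has exactly $\lceil n\alpha\rceil$ occurrences of $a$, so each $U_k$ ending in $b$ yields $\lceil n\alpha\rceil=\lceil (n-1)\alpha\rceil$ with $n=|U_k|$. But the proof stops there: the concluding contradiction, which you yourself call the principal obstacle, is not proved, and the plan you sketch for it fails on two counts. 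First, Lemma~\ref{theorem:primo} produces $U_i$'s of unbounded length but gives no control whatsoever over their last letter, so it cannot furnish ``arbitrarily large $|U_k|$'' with $U_k$ ending in $b$; for all your argument shows, exactly one $U_i$ ends in $b$, leaving you with a single ceiling identity at a single length and nothing to vary. Second, and more fundamentally, that identity involves only $n$ and $\alpha$ and is genuinely satisfiable together with all your local constraints: in the Sturmian word $aababaabaab\cdots$ (the Fibonacci word with its first two letters removed, a word of type $a$), the prefix $aab$ ends in $b$, is rich in $a$ (compare the factor $bab$), has penultimate letter $a$, and satisfies $\lceil 3\alpha\rceil=\lceil 2\alpha\rceil=2$. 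So no contradiction can be extracted from a single $U_k$ by letter counting plus the mechanical description; the impossibility is global, and your argument discards the factorization at precisely the point where it is needed.

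What is missing is the mechanism of the paper's Case 2b, which uses the factorization itself to manufacture a forbidden factor. If $U_k$ ends in $b$ and is rich in $a$, then $aU_k\notin\Ff s$: otherwise $aU_kb^{-1}$ would be a factor with $|aU_kb^{-1}|=|U_k|$ and $|aU_kb^{-1}|_b<|U_k|_b$, making $U_k$ rich in $b$. On the other hand, your (ii)-violator $U_j$ ends in $a$, and a minimality argument shows every $U_l$ with $l\geq j$ ends in $a$ (the first later block ending in $b$ would be immediately preceded by a block ending in $a$, exhibiting $aU_l\in\Ff s$). Then Lemma~\ref{theorem:primo} gives $m>j$ with $|U_m|>|U_k|$, so $U_k\in\Pre U_m$, while $U_{m-1}U_m\in\Ff s$ with $U_{m-1}$ ending in $a$ exhibits $aU_m\in\Ff s$, hence $aU_k\in\Ff s$ --- the desired contradiction. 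Some argument of this type, in which the factorization produces the factor $aU_k$, is what your proof still needs; the ceiling identity cannot substitute for it.
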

\begin{proof} Let us first suppose that $s$ begins in the letter $b$. All prefixes $U_i$, $i\geq 1$, of $s$ begin in the letter $b$ and, as $s$ is of type $a$, have to terminate in the letter $a$. Thus in this case Property $i)$ is satisfied.

Let us then suppose that $s$ begins in the letter $a$. Now either all prefixes $U_i$, $i\geq 1$, terminate in the letter $a$ or  all prefixes $U_i$, $i\geq 1$, terminate in the letter $b$ or some of the prefixes terminate in the letter $a$ and some in the letter $b$.
We have then to consider the following cases:

\vspace{2 mm}

\noindent
Case 1. All prefixes $U_i$, $i\geq 1$, terminate in the letter $b$. 

Since $s$ is of type $a$, no one of the prefixes $U_i$, $i\geq 1$,  can be a right special factor. This implies that $U_ia \in \Pre s$ and
Property $ii)$ is satisfied.

\vspace{2 mm}

\noindent
Case 2. Some of the prefixes $U_i$, $i\geq 1$, terminate in the letter $a$ and some in the letter $b$.

We have to consider two subcases:

\vspace{2 mm}

\noindent
a).  $r_s(U_i)=b$, for all $i\geq 1$.

As all $U_i$, $i\geq 1$, begin in $a$, if any $U_i$ were right special,  then by Lemma \ref{lemma:lastletter},
$r_s(U_i)=a$, a contradiction. 
It follows that for all $i\geq 1$, $U_ia \in \Pre s$.

\vspace{2 mm}

\noindent
b).  $r_s(U_i)=a$, for all $i\geq 1$.

Some of the prefixes $U_j$, $j\geq 1$, terminate in $a$ (since otherwise we are in  Case 1).
Let  $U_k$ be a prefix terminating in $a$ for a suitable $k\geq 1$. If a prefix $U_i$ terminates
in $b$, then $aU_i$ is not a factor of $s$. Indeed, otherwise,  the word $a U_ib^{-1}$ is such that $|a U_ib^{-1}|= |U_i|$ and
$|a U_ib^{-1}|_b< |U_i|_b$, so that  $r_s(U_i)= b$ a contradiction. Thus one derives that all $U_l$
with $l\geq k$ terminate in $a$. Moreover, if some $U_i$ terminate in $b$, by Lemma  \ref{theorem:primo}
there exists $j >k$ such that $U_j$ has the prefix $U_i$, so that $U_{j-1}U_i \in \Ff s$. Since $U_{j-1}$
terminates in $a$, one has that
$aU_i$ is a factor of $s$, a contradiction. Thus all $U_i$, $i\geq 1$, terminate in $a$.
\qed\end{proof}

\begin{proposition}\label{prop:basic} Let $s$ be a Sturmian word  having a factorization
$$ s = U_1\cdots U_n \cdots,$$
where for $ i\geq 1$, $U_i$ are non-empty prefixes of $s$ such that $r_s(U_i)=r_s(U_j)$ for all $i,j\geq 1$. Then there exists a Sturmian word $t$ such that
$$t = V_1\cdots V_n \cdots,$$
where for all $i\geq 1$, $V_i$ are non-empty prefixes of $t$, $r_t(V_i)=r_t(V_j)$ for all $i,j\geq 1$, and
$|V_1|<|U_1|$.
\end{proposition}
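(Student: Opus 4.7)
The plan is to desubstitute $s$ under the Sturmian morphism $R$ (or its symmetric counterpart) to produce the required $t$. By swapping $a$ and $b$ if necessary, I may assume $s$ is of type $a$; the type $b$ case is completely symmetric, using the morphism $a \mapsto ab$, $b \mapsto b$ in place of $R$. By Proposition~\ref{prop:twocases}, the factorization falls into one of two cases: (i) every $U_i$ ends in $a$, or (ii) every $U_i a$ is a prefix of $s$.

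I would first reduce case (ii) to case (i). If $s$ begins in $b$ then we are already in case (i) (as observed at the start of the proof of Proposition~\ref{prop:twocases}), so in case (ii) we may assume that $s$, and therefore each $U_i$, starts with $a$. Let $s' = a^{-1} s$. Since $s'$ shares its set of factors with $s$, it is again Sturmian of type $a$. Writing $U_i = a U'_i$, the factorization of $s$ reads $s = a U'_1 a U'_2 a U'_3 \cdots$, which regroups as $s' = V'_1 V'_2 V'_3 \cdots$ with $V'_i = U'_i a$. The hypothesis $U_i a \in \Pre s$ implies $V'_i \in \Pre s'$, and each $V'_i$ ends in $a$. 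Since $V'_i$ is obtained from $U_i$ by moving the initial letter to the end, the two words share a Parikh vector; because richness in a Sturmian word depends only on length and letter counts (via the formula of Prop.~2.1.10 of \cite{LO2} used in Lemma~\ref{theorem:primo}), we get $r_{s'}(V'_i) = r_s(U_i)$, still independent of $i$. Thus the factorization of $s'$ satisfies the hypotheses of case (i), with $|V'_1| = |U_1|$.

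It remains to treat case (i). Since $s$ is of type $a$, it has no factor $bb$, so $s$ has a unique preimage $t$ under $R$; by the second clause of Lemma~\ref{lemma:onetwo}, $t$ is Sturmian. Because each $U_i$ ends in $a$, the identity $s = R(t) = U_1 U_2 \cdots$ lifts uniquely: setting $V_i = R^{-1}(U_i)$, each $V_i$ is a non-empty prefix of $t$ and $t = V_1 V_2 \cdots$. The desired inequality $|V_1| < |U_1|$ then reduces, via $|U_1| = |V_1| + |V_1|_b$, to the claim that $V_1$ contains a $b$; this holds because either $s$ starts with $b$ (so does $U_1$ and hence $V_1$), or $s$ starts with $a$ and Lemma~\ref{lemma:cp} rules out $U_1 = a^p$, so $U_1$ and therefore $V_1$ contains a $b$.

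The main obstacle is verifying that $r_t(V_i) = r_t(V_j)$ for all $i,j$. For this I rely on the frequency identities $f_a(s) = 1/(1 + f_b(t))$ and $f_b(s) = f_b(t)/(1 + f_b(t))$ coming from $s = R(t)$, together with $|R(V)|_a = |V|$, $|R(V)|_b = |V|_b$, and $|R(V)| = |V| + |V|_b$. A short algebraic rearrangement then shows, for each $x \in \{a,b\}$, that the inequality $|V|_x > |V| f_x(t)$ characterizing richness of $V$ in $t$ is equivalent to the inequality $|R(V)|_x > |R(V)| f_x(s)$ characterizing richness of $R(V)$ in $s$. Hence $r_t(V_i) = r_s(U_i)$ is constant, completing the proof.
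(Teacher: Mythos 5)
Your proof is correct, but it takes a genuinely different route from the paper's in two respects. First, the paper never reduces case (ii) to case (i): it runs two parallel desubstitution arguments, using the morphism $L_a$ ($a\mapsto a$, $b\mapsto ab$) when $U_ia\in\Pre s$ for all $i$, and the morphism $R$ when all $U_i$ end in $a$, each with its own verification of Points 1--5. Your rotation trick --- passing from $s$ to $s'=a^{-1}s$ and from $U_i=aU'_i$ to $V'_i=U'_ia$, observing that $\Ff s'=\Ff s$ (by recurrence of $s$) and that richness depends only on length, letter counts and the factor set, so $r_{s'}(V'_i)=r_s(U_i)$ --- collapses everything into the single $R$-desubstitution; this is a real simplification. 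Second, for the crucial step $r_t(V_i)=r_s(U_i)$ the paper argues by explicit witnesses: it takes a factor $V'_i$ of $t$ witnessing the richness of $V_i$ and shows that $F_ia^{-1}$ (resp. $aF_i$), with $F_i=R(V'_i)$, is a factor of $s$ of length $|U_i|$ witnessing the same richness of $U_i$ (and similarly $a^{-1}F_i$, $F_ia$ in the $L$ case). You instead use the frequency characterization ``$V$ is rich in $x$ iff $|V|_x>|V|f_x$'' and check that this inequality is preserved under $R$ via $f_a(s)=1/(1+f_b(t))$ and $f_b(s)=f_b(t)/(1+f_b(t))$; the algebra indeed closes in both letters. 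The one point you should make explicit is that this characterization requires both directions: the implication (rich $\Rightarrow$ inequality) is exactly what is proved inside Lemma~\ref{theorem:primo}, while the converse follows from the dichotomy stated in the paper (each non-empty factor is rich in exactly one letter): if $|V|_x>|V|f_x(t)$ yet $V$ were rich in $\bar{x}$, adding the two strict inequalities would give $|V|>|V|$. With that line added, your argument is complete; it is shorter and more unified than the paper's, whereas the paper's witness construction avoids any appeal to frequencies at this step and is in that sense more elementary.
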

\begin{proof} We can suppose without loss of generality that $s$ is a Sturmian word of type $a$.
From Proposition \ref{prop:twocases} either all  $U_i$, $i\geq 1$, terminate in the letter $a$
or for all $i\geq 1$, $U_ia \in \Pre s$. We  consider two cases:

\vspace{2 mm}

\noindent
Case 1.  For all $i\geq 1$, $U_ia \in \Pre s$. 

We can suppose that $s$ begins in the letter $a$. Indeed, otherwise, if the first letter of $s$ is $b$, then all $U_i$, $i\geq 1$, begin in the letter $b$ and, as $s$ is of type $a$, they have to terminate in the letter $a$. Thus the case that the first letter of $s$ is $b$ will be considered when we will analyze  Case 2.

We consider the injective endomorphism of $\{a, b\}^*$, $L_a$, or simply $L$,  defined by 
$$L(a)=a \ \mbox{and} \ \  L(b)=ab. $$ Since $s$ is of type $a$, the first letter of $s$ is $a$,  and $X=\{a, ab\}$ is a code having a finite deciphering delay (cf. \cite{BP}), the word $s$ can be uniquely factorized by the elements of $X$. Thus there exists a unique word $t \in \{a,b\}^{\omega}$ such that  $s = L(t)$. The following holds:
\begin{itemize}
\item[1.] The word $t$ is a Sturmian word.
\item[2.] For any $i\geq 1$ there exists a non-empty prefix $V_i$ of $t$ such that $L(V_i)= U_i$.
\item[3.] The word $t$ can be factorized as $ t= V_1\cdots V_n \cdots.$
\item[4.] $|V_1|<|U_1|$.
\item[5.] For all $i,j \geq 1$, $r_t(V_i)= r_t(V_j)$.
\end{itemize}
Point 1. This is a consequence of the fact that $L$ is a standard Sturmian morphism (see Corollary 2.3.3 in Chap. 2 of \cite{LO2}).

\vspace{2 mm}

\noindent
Point 2.  For any $i\geq 1$, since $U_ia\in \Pre s$  and any pair $(c, a)$ with $c\in \{a, b\}$ is synchronizing for $X^{\infty}=X^*\cup X^{\omega}$ (cf. \cite{BP}), one has  that $U_i\in X^*$, so that there exists $V_i\in \Pre t$ such that
$L(V_i)= U_i$.

\vspace{2 mm}

\noindent
Point 3. One has $L(V_1\cdots V_n\cdots) = U_1\cdots U_n \cdots= s = L(t)$. Thus
$t= V_1\cdots V_n \cdots$.

\vspace{2 mm}

\noindent
Point 4. By Lemma \ref{lemma:cp}, $U_1$ is not a power of $a$ so that in $U_1$ there must be at least one occurrence of the letter $b$. This implies that $|V_1|<|U_1|$.

\vspace{2 mm}

\noindent
Point 5. We shall prove that for all $i\geq 1$, $r_t(V_i)= r_s(U_i)$. From this  one has that for all $i,j \geq 1$, $r_t(V_i)= r_t(V_j)$.

Since $t$ is a Sturmian word, there exists $V'_i\in \Ff t$ such that 
$$ |V_i|= |V'_i| \ \mbox{and} \ \mbox{either} \ |V_i|_a> |V'_i|_a \ \mbox{or} \  |V_i|_a< |V'_i|_a .$$
In the first case $r_t(V_i)= a$ and in the second case $r_t(V_i)= b$. Let us set
$$ F_i = L(V'_i).$$
Since $U_i = L(V_i)$, from the definition of the morphism $L$ one has:
\begin{equation}\label{eq:uno}
|F_i|_a = |V'_i|_a+|V'_i|_b = |V'_i|, \ |F_i|_b= |V'_i|_b.
\end{equation}
\begin{equation}\label{eq:due}
|U_i|_a = |V_i|_a+|V_i|_b = |V_i|, \ |U_i|_b= |V_i|_b.
\end{equation}

Let us first consider the case $r_t(V_i)= a$, i.e., $|V_i|_a= |V'_i|_a+1$ and $|V_i|_b= |V'_i|_b-1$.
From the preceding equations one has:
$$ |F_i|= |U_i|+1.$$
Moreover, from the definition of $L$ one has that  $F_i$ begins in the letter $a$. Hence,
$|a^{-1}F_i|= |U_i|$ and $|a^{-1}F_i|_a= |F_i|_a-1= |U_i|_a-1$. Thus $|U_i|_a >|a^{-1}F_i|_a$.
Since $a^{-1}F_i \in \Ff s$, one has 
$$r_s(U_i)= r_t(V_i)= a .$$
Let us now consider the case $r_t(V_i)= b$, i.e., $|V_i|_a= |V'_i|_a-1$ and $|V_i|_b= |V'_i|_b+1$.
From (\ref{eq:uno}) and (\ref{eq:due}) one derives:
$$|U_i|= |F_i|+1,$$
and $|U_i|_b>|F_i|_b$. Now $F_ia$ is a factor of $s$. Indeed, $F_i= L(V'_i)$ and for any $c\in \{a, b\}$
such that $V'_ic \in \Ff t$ one has $L(V'_ic)= F_iL(c)$. Since for any letter $c$, $L(c)$ begins in the letter $a$ it follows that $F_ia \in \Ff s$. Since $|F_ia| = |U_i|$ and $|U_i|_b >|F_i|_b = |F_ia|_b$, one has that $U_i$ is rich in $b$. Hence, $r_s(U_i)= r_t(V_i)=b$.

\vspace{2 mm}

\noindent
Case 2. All  $U_i$, $i\geq 1$, terminate in the letter $a$.

We consider the injective endomorphism of $\{a, b\}^*$, $R_a$, or simply $R$,  defined in (\ref{morf:erre}). 
 Since  $s$ is of type $a$ and $X=\{a, ba\}$ is a  prefix code, the word $s$ can be uniquely factorized by the elements of $X$. Thus there exists a unique word $t \in \{a,b\}^{\omega}$ such that  $s = R(t)$. The following holds:

\begin{itemize}
\item[1.] The word $t$ is a Sturmian word.
\item[2.] For any $i\geq 1$ there exists a non-empty prefix $V_i$ of $t$ such that $R(V_i)= U_i$.
\item[3.] The word $t$ can be factorized as $ t= V_1\cdots V_n \cdots.$
\item[4.] $|V_1|<|U_1|$.
\item[5.] For all $i,j \geq 1$, $r_t(V_i)= r_t(V_j)$.
\end{itemize}

\noindent
Point 1. From Lemma \ref{lemma:onetwo}, since $R(t)=s$ it follows that $t$ is Sturmian.

\vspace{2 mm}

\noindent
Point 2.  For any $i\geq 1$, since $U_i$ terminates in the letter $a$  and any pair $(a, c)$ with $c\in \{a, b\}$ is synchronizing for $X^{\infty}$, one has  that $U_i\in X^*$, so that there exists $V_i\in \Pre t$ such that
$R(V_i)= U_i$.

\vspace{2 mm}

\noindent
Point 3. One has $R(V_1\cdots V_n\cdots) = U_1\cdots U_n \cdots= s = R(t)$. Thus
$t= V_1\cdots V_n \cdots$.

\vspace{2 mm} 

\noindent
Point 4. By Lemma \ref{lemma:cp}, $U_1$ is not a power of the first letter  $c$ of $s$, so that in $U_1$ there must be at least one occurrence of the letter $\bar c$. This implies that $|V_1|<|U_1|$.

\vspace{2 mm}

\noindent
Point 5. We shall prove that for all $i\geq 1$, $r_t(V_i)= r_s(U_i)$. From this  one has that for all $i,j \geq 1$, $r_t(V_i)= r_t(V_j)$.

Since $t$ is a Sturmian word, there exists $V'_i\in \Ff t$ such that 
$$ |V_i|= |V'_i| \ \mbox{and} \ \mbox{either} \ |V_i|_a> |V'_i|_a \ \mbox{or} \  |V_i|_a< |V'_i|_a .$$
In the first case $r_t(V_i)= a$ and in the second case $r_t(V_i)= b$. Let us set
$$ F_i = R(V'_i).$$
Since $U_i = R(V_i)$, from the definition of the morphism $R$ one has that equations (\ref{eq:uno}) and
(\ref{eq:due}) are satisfied. 

Let us first consider the case $r_t(V_i)= a$, i.e., $|V_i|_a= |V'_i|_a+1$ and $|V_i|_b= |V'_i|_b-1$.
From the preceding equations one has:
$$ |F_i|= |U_i|+1.$$
From the definition of the morphism $R$ one has that $F_i= R(V'_i)$ terminates in the letter $a$.
Hence, $|F_ia^{-1}|= |U_i|$ and $|F_ia^{-1}|_a=  |F_i|_a-1= |U_i|_a-1$. Thus $|U_i|_a=|F_ia^{-1}|_a+1$, so that $U_i$ is rich in $a$ and $r_s(U_i)= r_t(V_i)=a$.

Let us now suppose that  $r_t(V_i)= b$, i.e., $|V_i|_a= |V'_i|_a-1$ and $|V_i|_b= |V'_i|_b+1$. 
From (\ref{eq:uno}) and (\ref{eq:due}) one derives:
$$|U_i|= |F_i|+1,$$
and $|U_i|_b>|F_i|_b$.
We prove that $aF_i \in \Ff s$. Indeed,  $F_i= R(V'_i)$ and for any $c\in \{a, b\}$
such that $cV'_i \in \Ff t$ one has $R(c)R(V'_i)= R(c)F_i$. Note that such a letter $c$ exists always as $t$ is recurrent. Since for any letter $c$, $R(c)$ terminates in the letter $a$ it follows that $aF_i \in \Ff s$. Since $|aF_i| = |U_i|$ and $|U_i|_b >|aF_i|_b = |F_i|_b$, one has that $U_i$ is rich in $b$. Hence, $r_s(U_i)= r_t(V_i)=b$.
\qed\end{proof}

\begin{theorem}\label{theorem:basic1} Let $s$ be a Sturmian word  having a factorization
$$ s = U_1\cdots U_n \cdots,$$
where each $U_i$, $i\geq 1$, is a non-empty prefix of $s$. Then there exist integers $i,j \geq 1$ such that
$r_s(U_i)\neq r_s(U_j)$.
\end{theorem}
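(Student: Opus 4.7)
The plan is to derive a contradiction by infinite descent, using Proposition \ref{prop:basic} as the length-reducing step.

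Suppose, for contradiction, that $r_s(U_i) = r_s(U_j)$ for all $i, j \geq 1$. This is precisely the hypothesis of Proposition \ref{prop:basic}. Applying the proposition yields a Sturmian word $t^{(1)}$ admitting a factorization $t^{(1)} = V^{(1)}_1 V^{(1)}_2 \cdots$ into non-empty prefixes of $t^{(1)}$ with the property that $r_{t^{(1)}}(V^{(1)}_i) = r_{t^{(1)}}(V^{(1)}_j)$ for all $i,j$, and such that $|V^{(1)}_1| < |U_1|$.

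The essential observation is that the conclusion of Proposition \ref{prop:basic} exactly reproduces its hypothesis: the new factorization is again into non-empty prefixes of a Sturmian word, all sharing a common richness value. Thus I can reapply the proposition to $(t^{(1)}, V^{(1)}_1 V^{(1)}_2 \cdots)$, obtaining a Sturmian word $t^{(2)}$ with an analogous factorization $V^{(2)}_1 V^{(2)}_2 \cdots$ satisfying $|V^{(2)}_1| < |V^{(1)}_1|$. Iterating this construction produces, for every $n \geq 1$, a Sturmian word $t^{(n)}$ with a monochromatically rich prefix-factorization whose first block has length strictly less than that of the previous stage. This yields an infinite strictly decreasing sequence
\[
|U_1| > |V^{(1)}_1| > |V^{(2)}_1| > \cdots
\]
of positive integers, which is absurd. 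The contradiction establishes the existence of indices $i, j$ with $r_s(U_i) \neq r_s(U_j)$.

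The main obstacle is already absorbed into Proposition \ref{prop:basic}, namely the delicate combinatorial analysis of desubstituting $s$ by either the morphism $L$ or $R$ (chosen according to the dichotomy of Proposition \ref{prop:twocases}) while preserving the richness property; the theorem itself reduces to a short well-foundedness argument on top of that proposition.
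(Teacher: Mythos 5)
Your proof is correct and takes essentially the same approach as the paper: the paper chooses a counterexample minimizing $|U_1|$ among all Sturmian words admitting such a monochromatic prefix-factorization and applies Proposition \ref{prop:basic} once to contradict minimality, which is just the extremal-principle phrasing of your infinite descent. The two arguments are logically interchangeable, both resting entirely on Proposition \ref{prop:basic} as the length-reducing step.
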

\begin{proof} Let $s$ be a Sturmian word and suppose that $s$ admits a factorization
$$ s = U_1\cdots U_n \cdots,$$
where for $ i\geq 1$, $U_i$ are non-empty prefixes such that for all $i,j \geq 1, $ $r_s(U_i)=r_s(U_j)$.
Among all Sturmian words having this property we can always consider a Sturmian word $s$ such that
$|U_1|$ is minimal.  Without loss of generality we can suppose that $s$ is of type $a$. By Proposition \ref{prop:basic}  there exists a Sturmian word $t$ such that
$$t = V_1\cdots V_n \cdots,$$
where for all $i\geq 1$, $V_i$ are non-empty prefixes, $r_t(V_i)=r_t(V_j)$ for all $i,j\geq 1$, and
$|V_1|<|U_1|$, that contradicts the minimality of the length of $U_1$.
\qed\end{proof}
\begin{theorem} Let $s$ be a Sturmian word. There exists a coloring $c$ of the non-empty factors of $s$,
$c: \Ff ^+ s \rightarrow \{0,1,2\}$ such that for any factorization
$$s = V_1\cdots V_n \cdots $$
in non-empty factors $V_i$, $i\geq 1$, there exist integers $i,j$ such that $c(V_i)\neq c(V_j)$.
\end{theorem}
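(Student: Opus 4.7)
The plan is to promote Theorem \ref{theorem:basic1} to the desired coloring statement by using the richness function $r_s$ on prefixes and reserving a single extra color for all non-prefix factors. So I would define
$$c : \Ff^+ s \longrightarrow \{0,1,2\}$$
by setting $c(w) = 0$ whenever $w$ is not a prefix of $s$, $c(w) = 1$ whenever $w \in \Pre s$ and $r_s(w) = a$, and $c(w) = 2$ whenever $w \in \Pre s$ and $r_s(w) = b$. This uses at most three colors, which matches the bound stated in the introduction.

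Next, I would take an arbitrary factorization $s = V_1 V_2 \cdots V_n \cdots$ into non-empty words $V_i$ and split into two cases. The first observation is that $V_1$ is automatically a prefix of $s$, so $c(V_1) \in \{1,2\}$. If some $V_i$ (necessarily with $i \geq 2$) fails to be a prefix of $s$, then $c(V_i) = 0 \neq c(V_1)$ and the factorization is not monochromatic.

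The remaining case is when every $V_i$ is a non-empty prefix of $s$. Here Theorem \ref{theorem:basic1} applies directly and yields indices $i,j$ with $r_s(V_i) \neq r_s(V_j)$; by construction this gives $c(V_i) \neq c(V_j)$, so again the factorization is not monochromatic.

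There is essentially no obstacle: the entire combinatorial content sits in Theorem \ref{theorem:basic1}, which handles the hardest situation (factorizations into prefixes), and the third color serves merely to deal with factorizations that eventually use a non-prefix factor. The only thing to check carefully when writing out the argument is that $V_1$ is always a prefix and hence receives color $1$ or $2$, which ensures that the non-prefix color $0$ creates a genuine mismatch in the first case.
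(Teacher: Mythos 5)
Your proposal is correct and coincides with the paper's own proof: the same three-color map (color $0$ for non-prefixes, colors $1,2$ according to $r_s$ on prefixes) and the same case analysis, reducing the prefix case to Theorem \ref{theorem:basic1} and using that $V_1$ is always a prefix to rule out monochromatic color $0$. The only cosmetic difference is that the paper phrases the argument by contradiction, while you argue directly.
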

\begin{proof} Let us define the coloring $c$ as: for any $V\in  \Ff^+ s$
$$ c(V)= \left \{\begin{array}{ll}
                 0   & \mbox{if $V$ is not a prefix of $s$}\\
                 1   & \mbox{if  $V$ is a prefix of $s$ and $r_s(V)= a$}\\
                 2   & \mbox{if  $V$ is a prefix of $s$ and $r_s(V)= b$}
                 \end{array}
                 \right . $$
Let us suppose to contrary that for all $i,j$, $c(V_i)=c(V_j)=x\in \{0,1,2\}$.  If $x=0$ we reach a contradiction as $V_1$ is a prefix of $s$ so that $c(V_1)\in \{1, 2\}$. If $x=1$ or $x=2$, then
all $V_i$ have to be prefixes of $s$ having the same richness, but this contradicts Theorem \ref{theorem:basic1}.
\qed\end{proof}
\section{The case of standard episturmian words}
An infinite word $s$ over the alphabet $A$ is called {\em standard episturmian} if it is closed under reversal and every left special factor of $s$ is a prefix of $s$. A word $s\in A^{\omega}$ is called {\em episturmian}  if there exists a standard episturmian $t\in A^{\omega}$ such that $\Ff s = \Ff t$.  We  recall the following facts about  episturmian words \cite{DJP,JP}:

\vspace{2 mm}

Fact 1. Every prefix of an aperiodic standard episturmian word $s$ is a left special factor of $s$. In particular an aperiodic standard episturmian  word on a two-letter alphabet is a standard Sturmian word.

\vspace{2 mm} 

Fact 2. If $s$ is a standard episturmian word with first letter $a$, then $a$ is  {\em separating}, i.e., for any $x,y\in A$ if $xy\in \Ff s$, then $a\in \{x, y\}$. 

\vspace{2 mm}

For each $x\in A$, let  $L_x$ denote the standard episturmian morphism \cite{JP}  defined for  any $y\in A$ by $L_x(y) = x$ if $y=x$ and $L_x(y)= xy$ for $x\neq y$.

\vspace{2 mm}

Fact 3. The infinite word $s\in A^{\omega}$ is standard episturmian if and only if there exist a standard
episturmian word $t$ and $a\in A$ such that  $s= L_a(t)$. Moreover, $t$ is unique and the first letter of $s$ is $a$.

\vspace{2 mm}

The following was proved in \cite{GR}:

\vspace{2 mm}

Fact 4.  A recurrent word $w$ over the alphabet  $A$ is episturmian if and only if for each factor $u$ of $w$, a letter $b$ exists (depending on $u$) such that $AuA\cap \Ff w \subseteq buA\cup Aub$.

\begin{definition} We say that a standard episturmian word $s$ is of type $a$, $a\in A$, if the first letter of $s$ is $a$.
\end{definition}

\begin{theorem}\label{theorem:basic2} Let $s$ be an aperiodic standard episturmian word over the alphabet $A$ and let $s = U_1U_2 \cdots$ be any factoring of $s$ with each $U_i$, $i\geq 1$, a non-empty prefix of $s$. Then there exist indices $i\neq j$ for which $U_i$ and $U_j$ terminate in a different letter.
\end{theorem}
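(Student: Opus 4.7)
The plan is to adapt the desubstitution strategy of Theorem~\ref{theorem:basic1} and proceed by minimal counterexample. Assuming the statement fails, pick a counterexample $(s, U_1 U_2 \cdots)$ with $|U_1|$ minimum. Let $a$ be the first letter of $s$. By Fact~3, there is a unique standard episturmian word $t$ with $s = L_a(t)$; aperiodicity of $s$ forces $t$ to be aperiodic as well, since $L_a(u^\omega) = L_a(u)^\omega$. Let $x \in A$ denote the common last letter of all the $U_i$. The goal is to pull the factoring back through $L_a$ to a factoring $t = V_1 V_2 \cdots$ into non-empty prefixes of $t$ all ending in the same letter, with $|V_1| < |U_1|$, thereby contradicting the minimal choice.

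The central step is to verify that the boundary following each $U_i$ in $s$ coincides with a boundary in the block decomposition $s = \prod_{j \geq 1} L_a(t_j)$, where the block $L_a(t_j)$ equals $a$ if $t_j = a$ and $a t_j$ if $t_j \neq a$. When $x \neq a$, this is immediate: any non-$a$ letter in $s$ can only be the terminal letter of a block $a t_j$, so each $U_i$ must end at a block boundary and $U_i = L_a(V_i)$ for a prefix $V_i$ of $t$ ending in $x$. The delicate case is $x = a$. Here the letter immediately following $U_i$ in $s$ is the first letter of $U_{i+1}$, which equals the first letter $a$ of $s$, so the factor $aa$ appears in $s$ at every boundary between consecutive $U_i$'s. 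By Fact~2 the letter $a$ is separating, and inspecting the block structure one sees that $aa$ cannot occur inside a block $L_a(y) = ay$ with $y \neq a$ (this would require $y = a$). Hence every occurrence of $aa$ in $s$ must straddle a block boundary in which the first $a$ is a standalone block $L_a(a)$, and so each $U_i$ again ends at a block boundary.

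Once block alignment is secured, I write $U_i = L_a(V_i)$ with $V_i$ a non-empty prefix of $t$ ending in $x$, and the injectivity of $L_a$ on infinite words yields $V_1 V_2 \cdots = t$. To finish, $|V_1| < |U_1|$ follows from Lemma~\ref{lemma:cp}, whose proof uses only aperiodicity and transfers unchanged to the episturmian setting: $U_1$ is not a power of $a$, so $U_1$ contains a letter $y \neq a$. Since $L_a$ produces a non-$a$ letter in its image only from a non-$a$ letter of its argument, $V_1$ must also contain some letter distinct from $a$, and hence $|U_1| = |L_a(V_1)| > |V_1|$. The pair $(t, V_1 V_2 \cdots)$ is thus a strictly smaller counterexample, contradicting minimality. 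The main obstacle is precisely the case $x = a$: block alignment of the $U_i$ is not automatic there and must be extracted from the separating property of the first letter applied to the factor $aa$ arising at each boundary.
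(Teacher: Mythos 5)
Your overall strategy (minimal counterexample, desubstitution through $L_a$, the length decrease via Lemma~\ref{lemma:cp}) is the same as the paper's, and your treatment of the case $x\neq a$ is correct: a non-$a$ letter can only be the terminal letter of a block, so the \emph{prefix} occurrence of each $U_i$ is block-aligned and $U_i=L_a(V_i)$ with $V_i\in\Pre t$. But in the delicate case $x=a$ there is a genuine gap. Your $aa$-straddling argument establishes block alignment only for the occurrence of $U_i$ \emph{inside the factorization}, i.e.\ at position $|U_1\cdots U_{i-1}|$, because the letter following \emph{that} occurrence is the first letter of $U_{i+1}$. What you actually need, in order to write $U_i=L_a(V_i)$ with $V_i$ a \emph{prefix} of $t$, is block alignment of the \emph{prefix} occurrence of $U_i$ (the one starting at position $0$), and these are different occurrences of the same word. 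Since $X=\{a\}\cup\{ax \mid x\neq a\}$ is not a prefix code, the inference ``$U_i\in X^*$ and $U_i\in\Pre s$, hence $L_a^{-1}(U_i)\in\Pre t$'' is false. Concretely, for the Tribonacci word $s=abacabaabacab\cdots=L_a(t)$ with $t=bcbab\cdots$, the word $U=aba=L_a(ba)$ lies in $X^*$, is a prefix of $s$, and ends in $a$, yet $ba$ is not a prefix of $t$: the prefix occurrence of $U$ ends in the middle of the block $ac$.

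Block alignment of the prefix occurrence is equivalent to the statement $U_ia\in\Pre s$ (every occurrence of $a$ in $s$ begins a block, so the boundary after a prefix $U_i$ is a block boundary exactly when the next letter of $s$ is $a$), and this is precisely the crux that the paper's proof spends its effort on. The paper derives it, for $i>1$, by combining three ingredients: $aU_ia\in\Ff s$ (from the factorization, since $U_{i-1}$ ends in $a$ and $U_{i+1}$ begins in $a$); the fact that the prefix $U_ix$ of the standard episturmian word $s$ is left special (Fact~1), so $yU_ix\in\Ff s$ for some $y\neq a$; and Richomme's local balance property (Fact~4), which applied to $aU_ia$ and $yU_ix$ forces $x=a$. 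Your proposal never invokes Fact~4 nor the left-specialty of prefixes, and the separating property of $a$ (Fact~2) alone cannot deliver $U_ia\in\Pre s$, as the Tribonacci example shows. So the case $x=a$ of your argument needs to be replaced by (something like) the paper's Fact~1/Fact~4 argument; as written, the desubstitution step collapses exactly there.
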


\begin{proof} Suppose to the contrary that there exists an aperiodic standard episturmian word  $s$
over the alphabet $A$ admitting a factorization  $s = U_1U_2 \cdots$ in which all $U_i$ are non-empty  prefixes of $s$ ending in the same letter. Amongst all aperiodic standard episturmian words  over the  alphabet $A$ having the preceding factorization, we may choose one such $s$  for which $|U_1|$ is minimal. Let $a\in A$ be the first letter of $s$, so that $s$ is of type $a$.

Let us now prove that for every $i\geq 0$, one has that $U_ia$ is a prefix of $s$. Let us first suppose
that for all $i\geq 1$,  $U_i$ ends in a letter $x\neq a$. Since $a$ is separating  ($s$ is of type $a$),  $x$ can be followed only by $a$, so that the prefix $U_i$ can be followed only by $a$. This implies that $U_ia$ is a prefix of $s$.

Let us then suppose that for all $i\geq 1$,  $U_i$ ends in $a$.
 Since $U_1$ is a prefix of $s$, and all $U_i$, $i\geq 1$, begin in $a$
one has that $U_1a$ is a prefix of $s$. Now let $i> 1$. Since $U_{i-1}$ ends in $a$  it follows that $aU_ia$ is a factor of $s$.

Let $U_ix$ be a prefix of $s$; we want to
show that $x=a.$ Since $U_ix$ is left special (as it is a prefix of $s$), there exists a
letter $y\neq a$ such that $yU_ix$ is a factor of $s.$
Now from this and by Fact 4,
 there exists a letter $b$ (depending only on $U_i$) such that 
 either $x=b$ or $y=b.$

So now, by Fact 4, since $aU_ia$ and $yU_ix$ are both factors of $s$,  we deduce $b=a$ and
either $x=a$ or $y=a.$ Since $y\neq a,$ it follows that $x=a.$
Therefore, we have proved that  for every $i\geq 1$,  $U_ia$ is a prefix of $s$.

Let us now observe that $U_1$ must contain the occurrence of a letter $x\neq a$. Indeed, otherwise,
suppose that $U_1 = a^k$ and consider the least $i>1 $ such that $x$ occurs in $U_i$. This  implies, by using an argument similar to that of the proof of Lemma \ref{lemma:cp}, that $U_i$ cannot be a prefix of $s$.

\vspace{2 mm}

\noindent
By Fact 3, one has that there exists a unique standard episturmian word $s'$ such that
$ s = L_a(s')$
and $\alf s'  \subseteq \alf s \subseteq A$. Moreover, since $s$ is
aperiodic, trivially one has that also $s'$ is aperiodic.

Let us observe that the set  $X= \{a\} \cup \{ax \mid x\in A\}$ is a code having deciphering
delay equal to $1$ and that any pair $(x,a)$ with $x\in A$ is synchronizing for $X^{\infty}$. This implies
that $s$ can be uniquely factored by the words of $X$.  Moreover, since $U_ia$ is a prefix of $s$, from the synchronization property of $X^{\infty}$,
it follows that for each  $i \geq 1$,
       $$ U_i = L_a(U'_i), $$
where $U'_i$ is a prefix of $s'$. 
  From the definition of $L_a$ and the preceding formula,
one has that the last letter of $U_i$ is equal to the last letter of $U'_i$.

Moreover, 
$$L_a(U'_1\cdots U'_n\cdots)= U_1\cdots U_n \cdots = s= L_a(s').$$
Thus
 $s'= U'_1\cdots U'_n\cdots$,
 where each  $U'_i$, $i\geq 1$, is a non-empty prefix of $s'$  and for all $i,j \geq 1$,  $U'_i$ and $U'_j$ terminate in the same letter.
Since in $U_1= L_a(U'_1)$ there is the occurrence of a letter different from $a$ one obtains that $|U'_1|<|U_1|$ which is a contradiction.
\qed\end{proof}

Let us observe that in the case of a standard  Sturmian word, Theorem \ref{theorem:basic2} is an immediate consequence of Theorem \ref{theorem:basic1} and Lemma \ref{lemma:lastletter}. 

\begin{theorem} Let $s$ be an aperiodic standard episturmian word and let $k=\card(\alf s)$. There exists a coloring $c$ of the non-empty factors of $s$,
$c: \Ff ^+ s  \rightarrow \{0,1,\ldots, k \}$ such that for any factorization
$$s = V_1\cdots V_n \cdots $$
in non-empty factors $V_i$, $i\geq 1$, there exist integers $i,j$ such that $c(V_i)\neq c(V_j)$.
\end{theorem}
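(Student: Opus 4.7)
The plan is to adapt the coloring used in the Sturmian case by replacing the binary richness datum $r_s(V) \in \{a,b\}$ with the datum ``last letter of $V$'', which now takes $k = \card(\alf s)$ possible values. Together with a distinguished color for factors that are not prefixes of $s$, this gives exactly $k+1$ colors, matching the palette $\{0, 1, \ldots, k\}$ specified in the statement. The heavy combinatorial lifting --- namely the impossibility of factoring $s$ into non-empty prefixes all ending in the same letter --- has already been carried out in Theorem \ref{theorem:basic2}, so the final argument reduces to a clean bookkeeping step analogous to the closing Sturmian theorem.

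Concretely, I would fix an enumeration $\alf s = \{x_1, x_2, \ldots, x_k\}$ and define $c \colon \Ff^+ s \rightarrow \{0, 1, \ldots, k\}$ by
$$ c(V)= \left\{\begin{array}{ll}
                 0  & \mbox{if $V$ is not a prefix of $s$,}\\
                 j  & \mbox{if $V$ is a prefix of $s$ whose last letter is $x_j$.}
                 \end{array}\right. $$
Given any factorization $s = V_1 V_2 \cdots$ with all $V_i$ non-empty, I would proceed by contradiction: assume there is a common color $\lambda$ with $c(V_i) = \lambda$ for every $i \geq 1$. If $\lambda = 0$, then since $V_1$ is a prefix of the factorization it is also a prefix of $s$, contradicting $c(V_1)=0$. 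Otherwise $\lambda \in \{1,\ldots,k\}$ and all $V_i$ are non-empty prefixes of $s$ terminating in the same letter $x_\lambda$, which is exactly what Theorem \ref{theorem:basic2} forbids.

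No step of this argument poses a genuine obstacle: the only point that deserves a moment's attention is the count of colors, to confirm that ``one color for non-prefixes plus one color per letter of $\alf s$'' uses exactly $k+1$ labels, matching the range allowed by the statement. As a sanity check, specialising to $k=2$ recovers the three-color bound of the preceding Sturmian theorem, and indeed in that case the coloring defined here coincides up to relabeling with the one given there, since by Lemma \ref{lemma:lastletter} the last letter of a standard-Sturmian prefix (which is always left special) determines, and is determined by, its richness.
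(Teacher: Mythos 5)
Your proposal is correct and is essentially identical to the paper's own proof: the same coloring (color $0$ for non-prefixes, one color per terminating letter for prefixes) and the same contradiction argument reducing everything to Theorem \ref{theorem:basic2}. Your closing observation relating the $k=2$ case to the Sturmian coloring via Lemma \ref{lemma:lastletter} also mirrors the remark the paper makes just before this theorem.
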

\begin{proof} Let $\alf s =\{a_1,\ldots, a_k\}$. We define the coloring $c$ as: for any $V\in  \Ff^+ s$
$$ c(V)= \left \{\begin{array}{ll}
                 0   & \mbox{if  $V$ is not a prefix of $s$} \\
                 1   & \mbox{if  $V$ is a prefix of $s$ terminating in $a_1$}\\
                 \vdots   &  \vdots\\
                 k   & \mbox{if $V$ is  a prefix of $s$ terminating in $a_k$}
                 \end{array}
                 \right . $$
Let us suppose by contradiction that for all $i,j$, $c(V_i)=c(V_j)=x\in \{0,1,\dots, k\}$.  If $x=0$ we reach a contradiction as $V_1$ is a prefix of $s$ so that $c(V_1)\in \{1, \ldots, k\}$. If $x\in \{1,\ldots,k\}$, then
all $V_i$ have to be prefixes of $s$ terminating in the same letter, but this contradicts Theorem \ref{theorem:basic2}.
\qed\end{proof}

\vspace{2 mm}

\noindent
{\bf Acknowledgments}

\vspace{2 mm}

\noindent
The authors are indebted to Tom Brown for his suggestions and comments. The second author acknowledges support from the Presidential Program for young researchers, grant MK-266.2012.1. The third author is partially supported by a FiDiPro grant from the Academy of Finland.

\small

\end{document}